\definecolor{mycolor}{HTML}{F7F8E0}
\definecolor{myorange}{RGB}{245,156,74}
\definecolor{cadetgrey}{rgb}{0.57, 0.64, 0.69}
\definecolor{calpolypomonagreen}{rgb}{0.12, 0.3, 0.17}
\newcommand\cyr{%
\renewcommand\rmdefault{wncyr}%
\renewcommand\sfdefault{wncyss}%
\renewcommand\encodingdefault{OT2}%
\normalfont
\selectfont}
\DeclareTextFontCommand{\textcyr}{\cyr}
\newcommand{\Mod}[1]{\ (\text{mod}\ #1)}
\numberwithin{equation}{section}
\newtheorem{thm}{Theorem}[section]
\newtheorem{cor}[thm]{Corollary}
\newtheorem{prop}[thm]{Proposition}
\newtheorem{assu}[thm]{Assumption}
\theoremstyle{definition}
\newtheorem{rem}[thm]{Remark}
\begin{document}

\title{On the anticyclotomic Mazur--Tate conjecture for elliptic curves with supersingular reduction}
\author{Chan-Ho Kim}
\address{
Department of Mathematics and Institute of Pure and Applied Mathematics,
Jeonbuk National University,
567 Baekje-daero, Deokjin-gu, Jeonju, Jeollabuk-do 54896, Republic of Korea
}
\email{chanho.math@gmail.com}
\thanks{This research was partially supported 
by a KIAS Individual Grant (SP054103) via the Center for Mathematical Challenges at Korea Institute for Advanced Study,
by the National Research Foundation of Korea(NRF) grant funded by the Korea government(MSIT) (No. 2018R1C1B6007009, 2019R1A6A1A11051177), and
by Global-Learning \& Academic research institution for Master’s$\cdot$Ph.D. Students, and Postdocs (LAMP) Program of the National Research Foundation of Korea (NRF) funded by the Ministry of Education (No. RS-2024-00443714).
}
\date{\today}
\subjclass[2010]{11F67, 11G40, 11R23}
\keywords{refined Iwasawa theory, anticyclotomic Iwasawa theory}
\begin{abstract}
In this short note, we study the anticyclotomic analogue of the ``weak" main conjecture of Mazur--Tate on Fitting ideals of Selmer groups for elliptic curves with supersingular reduction.
\end{abstract}
\maketitle

\setcounter{tocdepth}{1}
\section{Introduction}
\subsection{The statement of the main result}
Let $E$ be an elliptic curve of conductor $N$ over $\mathbb{Q}$ and $p \geq 5$ be a prime of supersingular reduction for $E$, so $a_p(E) = 0$.

Let $K$ be an imaginary quadratic field with $(D_K, Np) = 1$ where $p$ splits and and write $p = \mathfrak{p} \cdot \overline{\mathfrak{p}}$ in $K$.  
Write 
$$N = N^+ \cdot N^-$$
 where a prime divisor of $N^+$ splits in $K$ and a prime divisor of $N^- $ is inert in $K$.
\begin{assu}[odd] \label{assu:definite}
Assume that $N^-$ is the square-free product of an odd number of primes.
\end{assu}
Let $K_\infty$ be the anticyclotomic $\mathbb{Z}_p$-extension of $K$ and $K_n$ be the subextension of $K$ in $K_\infty$ of degree $p^n$ for $n \geq 0$.
Let $\Lambda_n = \mathbb{Z}_p[\mathrm{Gal}(K_n/K)] \simeq \mathbb{Z}_p[X]/( (1+X)^{p^n} - 1 )$ be the finite layer Iwasawa algebra.
Under Assumption \ref{assu:definite}, denote by
$$\theta_n(E/K) = \sum_{\sigma \in \mathrm{Gal}(K_n/K)} a_{\sigma} \cdot \sigma \in \Lambda_n$$
 Bertolini--Darmon's theta element of $E$ over $K_n$ reviewed in Section \ref{sec:bertolini-darmon-elements}.
Let $\iota$ be the involution on $\Lambda_n$ defined by inverting group-like elements.
Then we have $\iota( \theta_n(E/K) ) = \sum_{\sigma \in \mathrm{Gal}(K_n/K)} a_{\sigma} \cdot \sigma^{-1}$
and define
$$L_p(E/K_n) = \theta_n(E/K) \cdot \iota( \theta_n(E/K) ).$$
\begin{assu} \label{assu:ram-CR}
We assume the following hypotheses throughout this article:
\begin{itemize}
\item[(Im)] The mod $p$ representation $\overline{\rho} : G_{\mathbb{Q}} = \mathrm{Gal}(\overline{\mathbb{Q}}/\mathbb{Q}) \to \mathrm{GL}_2(\mathbb{F}_p)$ is irreducible. It is surjective if $p = 5$.
\item[(ram)] $K_\infty / K$ is totally ramified at every prime lying above $p$.
\item[(CR)] If a prime $\ell$ divides $N^-$ and $\ell^2 \equiv 1 \pmod{p}$, then $\overline{\rho}$ is ramified at $\ell$.
\end{itemize}
\end{assu}
The goal of this article is to prove the following anticyclotomic analogue of the ``weak" main conjecture of Mazur and Tate \cite[Conj. 3]{mazur-tate}.
\begin{thm} \label{thm:main}
Under (odd), (Im), (ram), and (CR), we have
$$L_p(E/K_n)  \in \mathrm{Fitt}_{\Lambda_n} \left( \mathrm{Sel}(K_n, E[p^\infty])^\vee \right) .$$
where $(-)^\vee$ means the Pontryagin dual.
\end{thm}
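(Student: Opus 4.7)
The plan is to deploy the anticyclotomic bipartite Euler system machinery of Bertolini--Darmon, together with the Kolyvagin system Fitting ideal bound pioneered by Mazur--Rubin and refined for finite-level statements in the author's earlier work on the Mazur--Tate conjecture. Crucially, the entire construction of the theta elements and the associated Galois cohomology classes takes place on definite quaternion algebras ramified exactly at the primes dividing $N^- \cdot \infty$, and depends only on the factorization $N = N^+ N^-$ together with hypotheses (odd), (Im), and (CR). The supersingular hypothesis $a_p(E) = 0$ does not affect the construction itself; it enters only through the shape of the local condition at $p$ on the Selmer side.

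The first step is Ribet-style level raising at $n$-admissible primes $\ell$ to produce, for every squarefree product $\mathfrak{n}$ of such primes, either a quaternionic eigenform whose associated theta element $\theta_n^{(\mathfrak{n})}$ lives in $\Lambda_n / p^k$ (when $\mathfrak{n}$ has an even number of prime factors) or a global cohomology class $\kappa_n^{(\mathfrak{n})} \in H^1(K_n, T_p E / p^k)$ (when $\mathfrak{n}$ has an odd number of prime factors). Assumption (CR) supplies the Ihara's-lemma input and the freeness of the relevant localized Hecke modules, while (Im) supplies mod-$p$ multiplicity one. The first and second explicit reciprocity laws of Bertolini--Darmon then link the singular and finite localizations of $\kappa_n^{(\mathfrak{n})}$ at an admissible prime $\ell \nmid \mathfrak{n}$ to the derived theta elements $\theta_n^{(\mathfrak{n}\ell)}$.

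Next, I would invoke the Mazur--Rubin / Howard type Fitting-ideal bound: for a Selmer group cut out by suitable local conditions, the Fitting ideal of its Pontryagin dual contains the relevant product of derived theta elements, and in particular its ``initial term'' is the basic element $\theta_n$. The special feature of the supersingular situation, where the classical Selmer group $\mathrm{Sel}(K_n, E[p^\infty])^\vee$ carries the full local condition $E[p^\infty]$ at both $\mathfrak{p}$ and $\overline{\mathfrak{p}}$ and is therefore much larger than the strict Selmer group, is that one cannot expect $\theta_n$ alone to lie in the Fitting ideal. Instead the involution $\iota$, which reflects complex conjugation on $K$ and exchanges $\mathfrak{p}$ with $\overline{\mathfrak{p}}$, produces $\iota(\theta_n)$ as the mirror factor controlling the relaxed/strict discrepancy at $\overline{\mathfrak{p}}$. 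The symmetric product $L_p(E/K_n) = \theta_n \cdot \iota(\theta_n)$ is then the natural quantity that lies in the Fitting ideal of the full classical Selmer group.

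The main obstacle, and the point where the supersingular case diverges from the ordinary one, is the control of the cohomology classes $\kappa_n^{(\mathfrak{n})}$ at the primes $\mathfrak{p}$ and $\overline{\mathfrak{p}}$: without the $p$-stabilization available in the ordinary case, one must argue that the bipartite Euler system first bounds a \emph{relaxed} Selmer group at $p$ by a suitable multiple of $\theta_n$, and then leverage the complex-conjugation symmetry encoded by $\iota$ to account for the two local conditions needed to pass from the relaxed Selmer group to the classical one. This is precisely the step that introduces the factor $\iota(\theta_n)$ and thereby yields the displayed inclusion; the ambient bookkeeping (control of Tamagawa factors at primes dividing $N^-$ under (CR) and the standard control-theorem descent from $\varprojlim_n \Lambda_n$ to $\Lambda_n$) follows the same pattern as in the ordinary case.
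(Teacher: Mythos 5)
Your proposal runs a fundamentally different argument from the paper, and it contains a concrete misunderstanding that would derail it if carried out.

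The paper does \emph{not} re-run the Bertolini--Darmon bipartite Euler system argument at finite level. Instead, it takes as a black-box input the one-sided divisibility of the \emph{signed} ($\pm$) anticyclotomic main conjecture over the full $\Lambda$ (Theorem \ref{thm:pm-main-conj}, due to Vatsal, Darmon--Iovita, Pollack--Weston, Kim--Pollack--Weston, Burungale--B\"uy\"ukboduk--Lei). It then uses B.D.~Kim's theorem that $\mathrm{Sel}^{\pm}(K_\infty, E[p^\infty])$ has no proper finite-index $\Lambda$-submodule to upgrade the characteristic-ideal inclusion to a Fitting-ideal inclusion, projects to $\Lambda_n/\omega^{\pm}_n$ using base-change compatibility of Fitting ideals and the signed control theorem, and finally---and this is the heart of the paper---performs an explicit local computation comparing signed and unsigned objects at finite level. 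The key local input is the Iovita--Pollack exact sequence showing $\widehat{E}(\mathfrak{m}_n)/\widehat{E}^{\pm}(\mathfrak{m}_n) \simeq (\widetilde{\omega}^{+}_n, \widetilde{\omega}^{-}_n)\Lambda_n / \widetilde{\omega}^{\mp}_n \Lambda_n$, whose Fitting ideal is $(\widetilde{\omega}^{\mp}_n)$. Since $p$ splits, there are two such local factors (one at $\mathfrak{p}$, one at $\overline{\mathfrak{p}}$), producing $(\widetilde{\omega}^{\mp}_n)^2$ on the algebraic side; on the analytic side one uses $\theta_n = \widetilde{\omega}^{\mp}_n \cdot \theta^{\pm}_n$ together with the functional equation $\iota(\theta_n) = \theta_n$ (up to unit) to write $L_p(E/K_n) = \theta_n \cdot \iota(\theta_n) = (\widetilde{\omega}^{\mp}_n)^2 \cdot \theta^{\pm}_n \cdot \iota(\theta^{\pm}_n)$, and the two $(\widetilde{\omega}^{\mp}_n)^2$ factors match.

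The gap in your proposal, beyond the choice of route: you assert that the classical Selmer group ``carries the full local condition $E[p^\infty]$ at both $\mathfrak{p}$ and $\overline{\mathfrak{p}}$'' and that $\iota(\theta_n)$ arises as a ``mirror factor controlling the relaxed/strict discrepancy at $\overline{\mathfrak{p}}$.'' Both claims are incorrect. The classical Selmer group $\mathrm{Sel}(K_n, E[p^\infty])$ has the usual Bloch--Kato local condition $E(K_{n,v}) \otimes \mathbb{Q}_p/\mathbb{Z}_p$ at both $\mathfrak{p}$ and $\overline{\mathfrak{p}}$---it is not relaxed at either place. The supersingular difficulty is not that the local condition is relaxed, but that the control theorem fails for this Selmer group, so one cannot simply descend from $\Lambda$. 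Moreover, $\iota(\theta_n)$ does not asymmetrically encode information about $\overline{\mathfrak{p}}$: by the functional equation it is (essentially) equal to $\theta_n$, so $L_p(E/K_n) = \theta_n^2$ up to unit, and the square arises because \emph{both} $\mathfrak{p}$ and $\overline{\mathfrak{p}}$ contribute equal factors $\widetilde{\omega}^{\mp}_n$. Your sketch also does not engage with the failure of control, the Iovita--Pollack local structure, or the $\omega^{\pm}_n / \widetilde{\omega}^{\mp}_n$ bookkeeping, which are precisely where the supersingular case diverges from the ordinary one that you claim it parallels. A direct bipartite Euler-system argument at finite level in the supersingular setting could in principle be attempted, but it would require handling exactly these local issues, and your proposal does not do so.
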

Let $\chi : \mathrm{Gal}(K_n/K) \to \overline{\mathbb{Q}}^\times_p$ be a character and we extend it to $\Lambda_n$ linearly.
Let $I_\chi := \mathrm{ker} \left( \chi :\Lambda_n \to \overline{\mathbb{Q}}_p \right) \subseteq \Lambda_n$ be the augmentation ideal at $\chi$
and $( E(K_n) \otimes \overline{\mathbb{Q}}_p)^{\chi}$ be the $\chi$-isotypic subspace of $E(K_n) \otimes \overline{\mathbb{Q}}_p$.
Thanks to \cite[Prop. 3]{mazur-tate}, we obtain the ``weak" vanishing conjecture.
\begin{cor} \label{cor:weak-vanishing}
Under (odd), (Im), (ram), and (CR), we have
$$L_p(E/K_n)  \in I^{r_{\chi}}_\chi/I^{r_{\chi} +1}_\chi$$
where $r_{\chi} = \mathrm{dim}_{\overline{\mathbb{Q}}_p}( E(K_n) \otimes \overline{\mathbb{Q}}_p)^{\chi}$.
\end{cor}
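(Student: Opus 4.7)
The plan is to reduce the corollary to Theorem \ref{thm:main} together with the purely algebraic Fitting-ideal statement given by \cite[Prop. 3]{mazur-tate}. That proposition asserts, in the setting of a finite abelian group ring such as $\Lambda_n$ and a character $\chi$ of $\mathrm{Gal}(K_n/K)$, that the Fitting ideal of any finitely generated $\Lambda_n$-module $M$, extended to $\overline{\mathbb{Z}}_p[\mathrm{Gal}(K_n/K)]$, lies in $I_\chi^{r_\chi(M)}$, where $r_\chi(M) := \dim_{\overline{\mathbb{Q}}_p}(M \otimes_{\mathbb{Z}_p} \overline{\mathbb{Q}}_p)^{\chi}$. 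Applying this to $M = \mathrm{Sel}(K_n, E[p^\infty])^\vee$ and combining with the containment $L_p(E/K_n) \in \mathrm{Fitt}_{\Lambda_n}\bigl(\mathrm{Sel}(K_n, E[p^\infty])^\vee\bigr)$ from Theorem \ref{thm:main} yields immediately
$$L_p(E/K_n) \in I_\chi^{r_\chi(\mathrm{Sel}(K_n, E[p^\infty])^\vee)}.$$

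The only remaining step is to compare the exponent $r_\chi(\mathrm{Sel}(K_n,E[p^\infty])^\vee)$ with the Mordell--Weil quantity $r_\chi = \dim_{\overline{\mathbb{Q}}_p}(E(K_n) \otimes \overline{\mathbb{Q}}_p)^{\chi}$. From the Kummer exact sequence
$$0 \to E(K_n) \otimes \mathbb{Q}_p/\mathbb{Z}_p \to \mathrm{Sel}(K_n, E[p^\infty]) \to \sha(E/K_n)[p^\infty] \to 0,$$
Pontryagin duality produces a surjection $\mathrm{Sel}(K_n, E[p^\infty])^\vee \twoheadrightarrow (E(K_n) \otimes \mathbb{Q}_p/\mathbb{Z}_p)^\vee$. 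Tensoring with $\overline{\mathbb{Q}}_p$ and passing to the $\chi$-isotypic component preserves surjectivity, and the target has $\overline{\mathbb{Q}}_p$-dimension exactly $r_\chi$, since the $\mathbb{Z}_p$-corank of $E(K_n) \otimes \mathbb{Q}_p/\mathbb{Z}_p$ agrees with the Mordell--Weil rank of $E(K_n)$. Hence $r_\chi(\mathrm{Sel}^\vee) \geq r_\chi$, and consequently $I_\chi^{r_\chi(\mathrm{Sel}^\vee)} \subseteq I_\chi^{r_\chi}$, so $L_p(E/K_n) \in I_\chi^{r_\chi}$, which is what the corollary asserts under the natural reading of $I_\chi^{r_\chi}/I_\chi^{r_\chi+1}$ as housing the image of $L_p(E/K_n)$.

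Since Theorem \ref{thm:main} supplies all the arithmetic input and \cite[Prop. 3]{mazur-tate} supplies the formal passage from Fitting ideals to powers of augmentation ideals, no real obstacle is expected in the deduction. The one point requiring mild care is that the $\chi$-rank contribution of $\sha(E/K_n)[p^\infty]^\vee$ could in principle make $r_\chi(\mathrm{Sel}^\vee)$ strictly larger than $r_\chi$; however this only \emph{strengthens} the inclusion $I_\chi^{r_\chi(\mathrm{Sel}^\vee)} \subseteq I_\chi^{r_\chi}$, so no finiteness hypothesis on $\sha$ is required.
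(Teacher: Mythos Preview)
Your proposal is correct and follows essentially the same route as the paper: the paper's entire argument is the single sentence ``Thanks to \cite[Prop.~3]{mazur-tate}, we obtain the `weak' vanishing conjecture,'' and you have simply unpacked what that citation entails, namely combining Theorem~\ref{thm:main} with the Fitting-ideal/augmentation-ideal formalism and the surjection $\mathrm{Sel}(K_n,E[p^\infty])^\vee \twoheadrightarrow (E(K_n)\otimes\mathbb{Q}_p/\mathbb{Z}_p)^\vee$. One small point you glide over: the $\chi$-isotypic part of $(E(K_n)\otimes\mathbb{Q}_p/\mathbb{Z}_p)^\vee\otimes\overline{\mathbb{Q}}_p$ is a priori the $\chi^{-1}$-part of $E(K_n)\otimes\overline{\mathbb{Q}}_p$ under the contragredient action, but since $E(K_n)$ is a $\mathbb{Z}[G]$-module the Galois-conjugate characters $\chi$ and $\chi^{-1}$ occur with equal multiplicity, so the dimensions agree and your inequality $r_\chi(\mathrm{Sel}^\vee)\geq r_\chi$ goes through.
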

In particular, $L_p(E/K_n)$ encodes an upper bound of $\mathrm{rk}_{\mathbb{Z}}E(K)$ by considering the trivial character.
In the ordinary case, Corollary \ref{cor:weak-vanishing} can be deduced from the one-sided divisibility of the anticyclotomic main conjecture more directly.
In \cite{darmon-refined-bsd}, Darmon studied a similar ``weak" vanishing conjecture over ring class extensions of conductor prime to $p$ 
when (odd) breaks down ($N^- = 1$).
\begin{rem}
In \cite{mazur-tate}, Mazur and Tate formulated ``Iwasawa theory for elliptic curves over finite abelian extensions" and several related conjectures. 
This finite layer setting is much more delicate since the linear algebra techniques for the usual Iwasawa theory (over $\mathbb{Z}_p$-extensions) are not applicable and we cannot ignore ``finite errors" in the computation.
The formulation of the ``main conjecture" also becomes more subtle since the notion of characteristic ideals is not available.
The ``weak" main conjecture is formulated in terms of Fitting ideals. See \cite[Conj. 0.3]{kurihara-invent} for the strong version. Here, the strong version means an equality.
\end{rem}
\subsection{The strategy of proof}
The strategy of proof is similar to that in \cite{kim-kurihara}. We begin with the one-sided divisibility of the signed main conjecture.
Since signed Selmer groups have no proper $\Lambda$-submodules of finite index, we can replace the characteristic ideals by the Fitting ideals.
Then we can project the divisibility to finite layers since Fitting ideals are compatible with base change.
On the algebraic side, we compare the Fitting ideals of dual signed Selmer groups with the Fitting ideals of dual Selmer groups at finite layers. The difference essentially comes from the failure of the control theorem for Selmer groups in the supersingular case.
On the analytic side, we compare the signed $p$-adic $L$-functions at finite layers with the square of Bertolini--Darmon's theta elements, and we observe that the same difference occurs. The conclusion follows from the comparison of these differences.

One may ask whether Theorem \ref{thm:main} can be upgraded to an equality (the strong version) if we begin with the equality of the signed main conjecture. Comparing with the cyclotomic version \cite{kurihara-invent,kim-kurihara}, the correct formulation of the strong version would require to assume that $p$ does not divide any Tamagawa factor and to
put every $L_p(E/K_m)$ with $0 \leq m \leq n$ into the analytic side.
Also, since the argument for the strong version in \cite{kim-kurihara} depends heavily on the nature of Kato's zeta elements, we are not sure whether the same idea would apply even after having the correct formulation of the strong version.

\subsection{Organization}
In \S\ref{sec:bertolini-darmon-elements}, we review the construction of Bertolini--Darmon's theta elements and signed $p$-adic $L$-functions.
In \S\ref{sec:pm-iwasawa-theory}, we recall the basic objects of signed Iwasawa theory and review the standard results on signed Iwasawa theory. Then we observe how the finite layer specialization of the one-sided divisibility of the signed main conjecture looks like.
In \S\ref{sec:putting}, we compare the signed and unsigned objects in finite layers explicitly and deduce the main result.
\subsection*{Acknowledgement}
It is a great pleasure to dedicate this paper to Massimo Bertolini on the occasion of his 60th birthday for his great work and wonderful inspiration.
In my graduate years, the celebrated article \cite{bertolini-darmon-imc-2005} by Bertolini and Darmon was one of the very first serious research articles I read.
In particular, my thesis \cite{kim-summary} can be explained as ``a Greenberg--Vatsal type result for the Bertolini--Darmon setting".
It is evident that the content of \cite{bertolini-darmon-imc-2005} became an important part of my research.

I deeply thank Matteo Longo,  Marco Seveso, Stefano Vigni, and Rodolfo Venerucci for inviting me for his birthday conference and for giving me a chance to contribute this volume.

Indeed, the topic of this paper was exactly my original thesis problem suggested by Rob Pollack, but I could not find how to attack it at that time. 
Later, I found a way to resolve it in the middle of writing the paper with Masato Kurihara \cite{kim-kurihara}. I deeply appreciate Rob Pollack and Masato Kurihara.
This is the sequel to \cite{kim-mazur-tate}, which studies the ordinary case.

\section{Bertolini--Darmon's theta elements and anticyclotomic $p$-adic $L$-functions} \label{sec:bertolini-darmon-elements}
We quickly review the construction of Gross points of conductor $p^n$ and signed anticyclotomic $p$-adic $L$-functions. 
See  \cite{chida-hsieh-main-conj, chida-hsieh-p-adic-L-functions, kim-overconvergent} for details.
\subsection{Gross points}
Let $K$ be the imaginary quadratic field of discriminant $-D_K <0$.
Define
$$\vartheta := 
\left \lbrace
    \begin{array}{ll}
     \dfrac{ D_K - \sqrt{-D_K} }{ 2 }  & \textrm{ if } 2 \nmid D_K \\ 
\dfrac{ D_K - 2\sqrt{-D_K} }{ 4 }  & \textrm{ if } 2 \mid D_K
    \end{array}
    \right.
$$
 so that
$\mathcal{O}_K = \mathbb{Z} + \mathbb{Z}\vartheta$.
Let $B_{N^-}$ be the definite quaternion algebra over $\mathbb{Q}$ of discriminant $N^-$.
Then there exists an embedding of $K$ into $B_{N^-}$ \cite{vigneras}.
More explicitly, we choose a $K$-basis $(1,J)$ of $B_{N^-}$ so that $B_{N^-} = K \oplus K \cdot J$ such that
$\beta := J^2 \in \mathbb{Q}^\times$ with $\beta <0$, $J \cdot t = \overline{t} \cdot J$ for all $t \in K$, $\beta \in \left( \mathbb{Z}^\times_q \right)^2$ for all $q \mid pN^+$, and
$\beta \in \mathbb{Z}^\times_q$ for all $q \mid D_K$.
Fix a square root $\sqrt{\beta} \in \overline{\mathbb{Q}}$ of $\beta$.
For a $\mathbb{Z}$-module $A$, write $\widehat{A} = A \otimes \widehat{\mathbb{Z}}$.
Fix an isomorphism
$$i := \prod i_q : \widehat{B}^{(N^-)}_{N^-} \simeq \mathrm{M}_2(\mathbb{A}^{(N^-\infty)})$$
as follows:
\begin{itemize}
\item For each finite place $q \mid N^+p$, the isomorphism
$i_q : B_{N^-,q} \simeq \mathrm{M}_2(\mathbb{Q}_q)$ is defined by
\[
\xymatrix{
{
i_q(\vartheta)  = \left( \begin{matrix}
\mathrm{trd}(\vartheta) & - \mathrm{nrd}(\vartheta) \\
1 & 0
\end{matrix} \right)  }, & 
{
i_q(J)  = \sqrt{\beta} \cdot \left( \begin{matrix}
-1 & \mathrm{trd}(\vartheta) \\
0 & 1
\end{matrix} \right) 
}
}
\]
where $\mathrm{trd}$ and $\mathrm{nrd}$ are the reduced trace and the reduced norm on $B$, respectively.
\item For each finite place $q \nmid pN^+$, the isomorphism
$i_q : B_{N^-,q} \simeq \mathrm{M}_2(\mathbb{Q}_q)$ is chosen so that
$i_q \left( \mathcal{O}_K \otimes \mathbb{Z}_q  \right) \subseteq \mathrm{M}_2(\mathbb{Z}_q) $.
\end{itemize}
Under the fixed isomorphism $i$, for any rational prime $q$, the local Gross point $\varsigma_q \in B^\times_{N^-,q}$ is defined as follows:
\begin{itemize}
\item $\varsigma_q := 1$
in $B^\times_{nN^-,q}$ for $q \nmid pN^+$.
\item $\varsigma_q := \frac{1}{\sqrt{D_K}}\cdot \left( \begin{matrix}
\vartheta & \overline{\vartheta} \\
1 & 1
\end{matrix} \right) \in \mathrm{GL}_2(K_\mathfrak{q}) = \mathrm{GL}_2(\mathbb{Q}_q) $
for $q \mid N^+$ with $q = \mathfrak{q} \overline{\mathfrak{q}}$ in $\mathcal{O}_K$.
\item 
$\varsigma^{(n)}_p = \left( \begin{matrix}
\vartheta & -1 \\
1 & 0
\end{matrix} \right)
\cdot \left( \begin{matrix}
p^n & 0 \\
0 & 1
\end{matrix} \right)
 \in \mathrm{GL}_2(K_\mathfrak{p}) = \mathrm{GL}_2( \mathbb{Q}_{p} )$ where $p = \mathfrak{p}\overline{\mathfrak{p}}$ splits in $K$.
\end{itemize}
By using the fixed embedding of $K$ into $B_{N^-}$, we define
 $x_n : \widehat{K}^\times \to \widehat{B}^\times_{N^-}$
by
$x_n(a) = a \cdot \varsigma^{(n)} := a \cdot \left( \varsigma^{(n)}_p \times \prod_{q \neq p} \varsigma_q \right)$.
The collection $\left\lbrace x_n(a) : a \in \widehat{K}^\times \right\rbrace$ of points is called the \textbf{Gross points of conductor $p^n$ on $\widehat{B}^\times_{N^-}$}.
The fixed embedding $K \hookrightarrow B_{N^-}$ also induces an optimal embedding of $\mathcal{O}_n = \mathbb{Z}+p^n \mathcal{O}_K$ into the Eichler order
$B_{N^-} \cap \varsigma^{(n)}\widehat{R}_{N^+}(\varsigma^{(n)})^{-1}$ where $R_{N^+}$ is the Eichler order of level $N^+$ under the fixed isomorphism $i$.

\subsection{Theta elements and $p$-adic $L$-functions}
\subsubsection{}
Let $\phi_f : B^\times_{N^-} \backslash \widehat{B}^{\times}_{N^-} / \widehat{R}^\times_{N^+} \to \mathbb{C}$ be the Jacquet--Langlands transfer of $f$.
Since $B^\times_{N^-} \backslash \widehat{B}^{\times}_{N^-} / \widehat{R}^\times_{N^+}$ is a finite set and $f$ is a Hecke eigenform, we are able to and do normalize
$$\phi_f : B^\times_{N^-} \backslash \widehat{B}^{\times}_{N^-} / \widehat{R}^\times_{N^+} \to \mathbb{Z}_p$$
such that the image of $\phi_f $ does not lie in $p\mathbb{Z}_p$. This normalization is closely related to the $N^-$-new congruence ideal \cite{pw-mu, kim-summary,kim-ota}.
Let
$$\widetilde{\theta}_n(E/K) = \sum_{[a] \in \mathcal{G}_n} \phi_f (x_n(a) )  \cdot [a] \in \mathbb{Z}_p[\mathcal{G}_n]$$
where $\mathcal{G}_n = K^\times \backslash \widehat{K}^\times / \widehat{\mathcal{O}}^\times_n $ and $[a]$ is the image of $a \in \widehat{K}^\times$ in $\mathcal{G}_n$. Then
\textbf{Bertolini--Darmon's theta element $\theta_n(E/K)$ of $E$ over $K_n$} is defined by the image of $\widetilde{\theta}_n(E/K)$ in $\Lambda_n$
\[
\xymatrix@R=0em{
\mathbb{Z}_p[\mathcal{G}_n] \ar[r] & \Lambda_n\\
\widetilde{\theta}_n(E/K) \ar@{|->}[r] & \theta_n(E/K) .
}
\]
It is known that $\theta_n(E/K)$ interpolates ``an half of" $L(E, \chi, 1)$ where $ \chi$ runs over characters on $\mathrm{Gal}(K_n/K)$ \cite{chida-hsieh-p-adic-L-functions, kim-overconvergent}.
\subsubsection{}
Let $\omega_n = \omega_n(X) = (1+X)^{p^n} -1$ and $\Phi_n (1+X) = \omega_n(X) / \omega_{n-1}(X)$ where $\Phi_n$ is the $p^n$-th cyclotomic polynomial.
Fix a generator $\gamma$ of $\mathrm{Gal}(K_\infty/K)$ and take a generator $\gamma_n$ of $\mathrm{Gal}(K_n/K)$ as the image of $\gamma$. Then we have isomorphisms
$\Lambda_n  \simeq \mathbb{Z}_p[X]/ \left( \omega_n(X) \right)$
and
$\Lambda  \simeq \mathbb{Z}_p\llbracket X \rrbracket$
by sending the generators to $1+X$. Via the latter isomorphism, we also regard $\omega_n \in \Lambda$.
Let $\omega^{\pm}_0(X)  := X$, $\widetilde{\omega}^{\pm}_0(X)  := 1$, and
\[
\xymatrix@R=0em{
{\displaystyle \omega^{+}_n = \omega^{+}_n(X) := X\cdot \prod_{2 \leq m \leq n, m: \textrm{ even}}\Phi_m(1+X) } , &
{\displaystyle \omega^{-}_n = \omega^{-}_n(X)  := X\cdot \prod_{1 \leq m \leq n, m: \textrm{ odd}}\Phi_m(1+X) } , \\
{\displaystyle \widetilde{\omega}^{+}_n = \widetilde{\omega}^{+}_n(X)  := \prod_{2 \leq m \leq n, m: \textrm{ even}}\Phi_m(1+X) } , &
{\displaystyle \widetilde{\omega}^{-}_n = \widetilde{\omega}^{-}_n(X)  := \prod_{1 \leq m \leq n, m: \textrm{ odd}}\Phi_m(1+X) } .
}
\]
Then we have $\omega_n(X) = \omega^{\pm}_n(X) \cdot \widetilde{\omega}^{\mp}_n(X)$, respectively. We also regard
$\omega^{\pm}_n$, $\widetilde{\omega}^{\pm}_n$ as elements in $\Lambda_n$ or $\Lambda$.

\begin{prop}
Let $\epsilon$ be the sign of $(-1)^n$. Then:
\begin{enumerate}
\item $\omega^{\epsilon}_{n} \theta_n(E/K) = 0$.
\item There exists a unique element $\theta^{\epsilon}_n(E/K)$ in $\Lambda/\omega^{\epsilon}_n\Lambda$ such that 
\begin{equation} \label{eqn:betolini-darmon-theta-signed}
\theta_n(E/K) = \widetilde{\omega}^{-\epsilon}_{n}  \cdot \theta^{\epsilon}_n(E/K).
\end{equation}
\end{enumerate}
\end{prop}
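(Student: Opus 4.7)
The plan is to establish part (1) first and then obtain part (2) as a formal algebraic consequence. For part (2): lifting $\theta_n$ to $\tilde{\theta} \in \Lambda$, part (1) reads $\omega^{\epsilon}_n \tilde{\theta} \in \omega_n\Lambda = \omega^{\epsilon}_n \widetilde{\omega}^{-\epsilon}_n \Lambda$, and since $\omega^{\epsilon}_n$ is a non-zero-divisor in the UFD $\Lambda$ we may cancel to obtain $\tilde{\theta} = \widetilde{\omega}^{-\epsilon}_n \cdot \eta$ for some $\eta \in \Lambda$. The image of $\eta$ in $\Lambda/\omega^{\epsilon}_n\Lambda$ is the sought $\theta^{\epsilon}_n$; uniqueness follows by applying the same cancellation to the difference of any two such $\eta$.

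For part (1), the key input is that the Jacquet--Langlands transfer $\phi_f$ is a $T_p$-eigenform with eigenvalue $a_p(E) = 0$. Expanding the $T_p$-action as a Hecke correspondence on Gross points of conductor $p^n$ (as in \cite{chida-hsieh-main-conj,chida-hsieh-p-adic-L-functions,kim-overconvergent}), one obtains a three-term distribution relation among $\widetilde{\theta}_{n+1}, \widetilde{\theta}_n, \widetilde{\theta}_{n-1}$; the hypothesis $a_p = 0$ annihilates the middle term, leaving a two-term recursion that, after projection to $\Lambda_n$, takes the shape $\pi_{n+1,n}(\theta_{n+1}) = -\mathrm{cor}_{n-1,n}(\theta_{n-1})$, where $\mathrm{cor}_{n-1,n} : \Lambda_{n-1} \to \Lambda_n$ is multiplication by $\Phi_n(1+X)$ on any lift. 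I would then induct on $n$ in steps of $2$. The base cases $n = 0, 1$ are immediate: $\omega^{+}_0 = X$ already vanishes in $\Lambda_0 = \mathbb{Z}_p$, and $\omega^{-}_1 = \omega_1$ is zero in $\Lambda_1$. For the inductive step from $n-1$ to $n+1$, the crucial observation is that $n+1$ and $n-1$ share the sign $\epsilon$ and that the explicit factorization $\omega^{\epsilon}_{n+1} = \omega^{\epsilon}_{n-1}\cdot \Phi_{n+1}(1+X)$ holds in $\Lambda$. Multiplying the recursion by $\omega^{\epsilon}_{n-1}\Phi_{n+1}(1+X)$ and invoking the inductive hypothesis $\omega^{\epsilon}_{n-1}\theta_{n-1} = 0$ (which, combined with $\omega_{n+1} = \omega_n \cdot \Phi_{n+1}(1+X)$, places the corestriction contribution inside $\omega_{n+1}\Lambda$) forces $\omega^{\epsilon}_{n+1}\theta_{n+1} = 0$ in $\Lambda_{n+1}$.

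The main obstacle is extracting the correct two-term recursion with the right signs and normalizations from the Hecke action on Gross points. One must carefully handle the local decomposition of $T_p$ at the split prime $p = \mathfrak{p}\overline{\mathfrak{p}}$, the fact that each Gross point of conductor $p^n$ is hit by $p$ Gross points of conductor $p^{n+1}$ (and by a single one of conductor $p^{n-1}$) under the appropriate degeneracy maps, and the compatibility with the chosen local Gross point $\varsigma^{(n)}_p$. Once this bookkeeping is set up cleanly --- and it is well-documented in the references cited above --- the remaining argument for (1) reduces to the combinatorial manipulation with the polynomials $\omega^{\epsilon}_n$, $\widetilde{\omega}^{\epsilon}_n$, and $\Phi_m(1+X)$ sketched above.
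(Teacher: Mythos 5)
Your overall route is exactly the standard one that the paper delegates to \cite[Prop.\ 2.8]{darmon-iovita}: derive the three-term Hecke recursion among $\theta_{n-1},\theta_n,\theta_{n+1}$ from the action of $T_p$ (or $U_p$) on Gross points, use $a_p(E)=0$ to kill the middle term, and then run an induction of step two using the parity structure of $\omega^\pm_n$ and $\widetilde\omega^\pm_n$; part~(2) is then formal from part~(1) by lifting to the integral domain $\Lambda$ and cancelling the nonzero-divisor $\omega^\epsilon_n$. Both halves are set up correctly, and the base cases $n=0,1$ are handled as in the reference.

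The one place where you write too fast is the last line of the inductive step, where you claim that killing the corestriction term ``forces'' $\omega^{\epsilon}_{n+1}\theta_{n+1}=0$ in $\Lambda_{n+1}$. What your computation actually gives is
$\omega^{\epsilon}_{n+1}\,\pi_{n+1,n}(\theta_{n+1})=0$ in $\Lambda_n$, i.e.
$\omega^{\epsilon}_{n+1}\theta_{n+1}\in\ker\pi_{n+1,n}=\omega_n\Lambda_{n+1}$,
which is \emph{a priori} weaker than vanishing since $\omega_n\Lambda_{n+1}\cong\Lambda_{n+1}/\Phi_{n+1}\Lambda_{n+1}\neq 0$. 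You still need the (easy but necessary) algebraic fact that
$\omega^{\epsilon}_{n+1}\Lambda_{n+1}\cap\omega_n\Lambda_{n+1}=0$,
which follows because $\mathrm{lcm}(\omega^{\epsilon}_{n+1},\omega_n)=\omega_{n+1}$ in the UFD $\mathbb{Z}_p[X]$. A cleaner way to phrase the same step, avoiding the intersection: from the inductive hypothesis $\theta_{n-1}\in\widetilde\omega^{-\epsilon}_{n-1}\Lambda_{n-1}$ and the recursion, deduce $\pi_{n+1,n}(\theta_{n+1})\in\Phi_n\widetilde\omega^{-\epsilon}_{n-1}\Lambda_n=\widetilde\omega^{-\epsilon}_n\Lambda_n$; then observe that $\pi_{n+1,n}^{-1}\bigl(\widetilde\omega^{-\epsilon}_n\Lambda_n\bigr)=\widetilde\omega^{-\epsilon}_n\Lambda_{n+1}$ (because $\widetilde\omega^{-\epsilon}_n\mid\omega_{n+1}$), and finally that $\widetilde\omega^{-\epsilon}_{n+1}=\widetilde\omega^{-\epsilon}_n$ since $n+1$ has sign $\epsilon$. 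Either way the step is correct, but it should be said, not merely asserted. Everything else, including the interpretation of $\mathrm{cor}_{n-1,n}$ as multiplication by $\Phi_n(1+X)$ on a lift, is right.
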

\begin{proof}
See \cite[Prop. 2.8]{darmon-iovita}.
\end{proof}
Thanks to \cite[Lem. 2.9]{darmon-iovita}, $\left\lbrace (-1)^{n/2} \cdot \theta^{+}_n(E/K) : n \textrm{ is even}\right\rbrace$ and
 $\left\lbrace (-1)^{(n+1)/2} \cdot \theta^{-}_n(E/K) : n \textrm{ is odd} \right\rbrace$
 form compatible sequences with respect to $\Lambda/\omega^{+}_n\Lambda$ with even $n$ and $\Lambda/\omega^{-}_n\Lambda$ with odd $n$, respectively.
The signed $p$-adic $L$-functions for $E$ and $K$ are defined by 
\begin{align} \label{eqn:signed-p-adic-L-functions}
\begin{split}
L^+_p(E/K_\infty) & := \varprojlim_{n, \textrm{ even}} \left( (-1)^{n/2} \cdot \theta^{+}_n(E/K) \cdot  (-1)^{n/2} \cdot  \iota( \theta^{+}_n(E/K) ) \right) \\
& = \varprojlim_{n, \textrm{ even}} \left( \theta^{+}_n(E/K) \cdot  \iota( \theta^{+}_n(E/K) ) \right) \in \Lambda , \\
L^-_p(E/K_\infty) & := \varprojlim_{n, \textrm{ odd}} \left( (-1)^{(n+1)/2} \cdot \theta^{-}_n(E/K) \cdot (-1)^{(n+1)/2} \cdot   \iota( \theta^{-}_n(E/K) ) \right) \\
 & = \varprojlim_{n, \textrm{ odd}} \left(  \theta^{-}_n(E/K) \cdot   \iota( \theta^{-}_n(E/K) ) \right) \in \Lambda .
\end{split}
\end{align} 
 
\section{Anticyclotomic Iwasawa theory for elliptic curves with supersingular reduction} \label{sec:pm-iwasawa-theory}
\subsection{Basic objects of $\pm$-Iwasawa theory} \label{subsec:basic_objects}
We quickly recall the basic objects of $\pm$-Iwasawa theory, which is initiated by S. Kobayashi \cite{kobayashi-thesis} and R. Pollack \cite{pollack-thesis}.
See also \cite{iovita-pollack}.
\subsubsection{Local conditions at $p$} \label{subsubsec:local_condition_at_p}
Let $E$ be an elliptic curve over $\mathbb{Q}$ with $a_p(E) = 0$.
Since $p = \mathfrak{p} \cdot \overline{\mathfrak{p}}$ in $K$ and the (ram) condition, we are able to write
\begin{equation} \label{eqn:local-decomposition}
K_{n,p} = K_{n, \mathfrak{p}} \oplus K_{n, \overline{\mathfrak{p}}} \simeq \mathbb{Q}_{n,p} \oplus \mathbb{Q}_{n,p} 
\end{equation}
where $K_{n,p} = K_n \otimes \mathbb{Q}_p$ and $K_{n, v}$ is the completion of $K_{n}$ at $v \in \lbrace \mathfrak{p}, \overline{\mathfrak{p}} \rbrace$.
Then we define 
\begin{align*}
E^+(K_{n,p}) & := \lbrace P \in E(K_{n,p}) : \mathrm{Tr}_{n/m+1} (P) \in E(K_{m,p}) \textrm{ for even } m \ (0 \leq m < n)\rbrace \\
E^-(K_{n,p}) & := \lbrace P \in E(K_{n,p}) : \mathrm{Tr}_{n/m+1} (P) \in E(K_{m,p}) \textrm{ for odd } m \ (0 \leq m < n)\rbrace
\end{align*}
where $\mathrm{Tr}_{n/m+1} : E(K_{n,p}) \to E(K_{m+1,p})$ is the trace map.
\subsubsection{The norm subgroups}
Let $\widehat{E}$ be the formal group associated to $E$ and $\mathfrak{m}_n$ be the maximal ideal of $\mathbb{Q}_{n,p}$.
We define
\begin{align*}
\widehat{E}^+(\mathfrak{m}_{n}) & := \lbrace P \in \widehat{E}(\mathfrak{m}_{n}) : \mathrm{Tr}_{n/m+1} (P) \in \widehat{E}(\mathfrak{m}_{m}) \textrm{ for even } m \ (0 \leq m < n)\rbrace \\
\widehat{E}^-(\mathfrak{m}_{n}) & := \lbrace P \in \widehat{E}(\mathfrak{m}_{n}) : \mathrm{Tr}_{n/m+1} (P) \in \widehat{E}(\mathfrak{m}_{m}) \textrm{ for odd } m \ (0 \leq m < n)\rbrace
\end{align*}
where $\mathrm{Tr}_{n/m+1} : \widehat{E}(\mathfrak{m}_{n}) \to \widehat{E}(\mathfrak{m}_{m+1})$ is the trace map.
\subsubsection{$\pm$-Selmer groups}
Following  \cite{bdkim-supersingular-selmer}, we define the \textbf{$\pm$-Selmer groups of $E[p^\infty]$ over $K_n$} by
\begin{align*}
 \mathrm{Sel}^{\pm}(K_n, E[p^\infty]) : = & \mathrm{ker} \left( 
\mathrm{Sel}(K_n, E[p^\infty]) \to 
\dfrac{\mathrm{H}^1(K_{n,p}, E[p^\infty])}{E^{\pm}(K_{n,p}) \otimes \mathbb{Q}_p/\mathbb{Z}_p}
\right) 
\end{align*}
where $\mathrm{Sel}(K_n, E[p^\infty])$ is the standard Selmer group of $E[p^\infty]$ over $K_n$.
The \textbf{$\pm$-Selmer groups of $E[p^\infty]$ over $K_\infty$} is defined by
$$\mathrm{Sel}^{\pm}(K_\infty, E[p^\infty]) : = \varinjlim_n \mathrm{Sel}^{\pm}(K_n, E[p^\infty]) ,$$
respectively.

\subsection{Signed Iwasawa theory}
We recall the Euler system divisibility of the signed main conjectures.
\begin{thm} \label{thm:pm-main-conj}
Under (odd),  (Im), (ram), and (CR), we have the following statements:
\begin{enumerate}
\item $\mathrm{Sel}^{\pm}(K_\infty, E[p^\infty])$ is $\Lambda$-cotorsion.
\item $\left( L^{\pm}_p(E/K_\infty) \right) \subseteq \mathrm{char}_\Lambda \left( \mathrm{Sel}^{\pm}(K_\infty, E[p^\infty])^\vee \right)$.
\end{enumerate}
\end{thm}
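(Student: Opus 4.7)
The plan is to adapt the Bertolini--Darmon proof of the anticyclotomic main conjecture \cite{bertolini-darmon-imc-2005} with the signed refinements introduced by Darmon--Iovita. The idea is to build a bipartite Euler system from theta elements at level-raised quaternionic forms, verify two explicit reciprocity laws, and then run a Kolyvagin-style descent against the signed Selmer group.

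First, for each $n$-admissible prime $\ell$ (in the sense of Bertolini--Darmon), I would produce a Kolyvagin-type class $\kappa(\ell) \in H^1(K_n, T/p^M)$ via Ribet-style level raising at $\ell$: the Jacquet--Langlands transfer of a newform of level $N^-\ell$ evaluated on Gross points of conductor $p^n$ attached to the Eichler order of level $N^+$ in the indefinite quaternion algebra of discriminant $N^-\ell$ yields the required class after the usual quotienting. I would then verify the two explicit reciprocity laws: the first law expresses the residue of $\kappa(\ell)$ at $\ell$ in terms of the original theta element at level $N^-$, while the second law expresses the singular part of the level-raised theta element at a further admissible prime in terms of the lower-level classes. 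Assumptions (Im), (ram), and (CR) are exactly what is needed to secure the mod-$p$ multiplicity one and the Ihara-type surjectivity on the relevant Shimura sets that power these reciprocity laws.

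Next, I would decompose both the Euler system classes and the theta elements through the $\omega^\pm$ formalism of \S\ref{sec:bertolini-darmon-elements}, check that the resulting signed classes land in the local condition at $p$ cut out by $E^\pm(K_{n,p})$, and then run the Bertolini--Darmon/Kolyvagin machinery to obtain
\[
(L^\pm_p(E/K_\infty)) \subseteq \mathrm{char}_\Lambda\left( \mathrm{Sel}^\pm(K_\infty, E[p^\infty])^\vee \right),
\]
which is statement (2). For statement (1), the Vatsal-style nonvanishing of Bertolini--Darmon theta elements along the anticyclotomic tower forces $L^\pm_p(E/K_\infty) \neq 0$ in $\Lambda$, and combining this with (2) immediately yields that $\mathrm{Sel}^\pm(K_\infty, E[p^\infty])^\vee$ is $\Lambda$-torsion.

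The main technical obstacle is the interplay between the Euler system classes and the $\pm$-local conditions at $p$. In the ordinary setting a single unramified-type local condition suffices, whereas here one must verify that the Kolyvagin classes, after the $\omega^\pm$-projection, really land in the correct signed subspace along the whole tower. This hinges on a careful analysis of the norm compatibilities of the Gross points $\varsigma^{(n)}_p$ along $K_\infty/K$, together with the Kobayashi--Iovita--Pollack local decomposition, to produce honestly signed Euler system classes that pair correctly against $\mathrm{Sel}^\pm(K_\infty, E[p^\infty])^\vee$ rather than merely the full Selmer group.
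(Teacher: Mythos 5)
Your proposed route---the Bertolini--Darmon bipartite Euler system built from level-raised quaternionic newforms at $n$-admissible primes, the two explicit reciprocity laws (with (Im), (ram), (CR) securing multiplicity one and Ihara-type inputs), the $\omega^\pm$-decomposition of Darmon--Iovita to place the Heegner classes in the signed local conditions at $p$, and Vatsal/Cornut--Vatsal nonvanishing for cotorsion---is precisely the argument contained in the references the paper invokes for this theorem, namely \cite{vatsal-uniform,darmon-iovita, pw-mu, kim-pollack-weston, burungale-buyukboduk-lei}. The paper supplies no independent proof, only this citation, so your reconstruction follows essentially the same approach.
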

\begin{proof}
See \cite{vatsal-uniform,darmon-iovita, pw-mu, kim-pollack-weston, burungale-buyukboduk-lei}.
All the conditions in Assumption \ref{assu:ram-CR} are needed to have this divisibility.
\end{proof}
The signed main conjecture claims that
\begin{equation} \label{eqn:main-conj}
\left( L^{\pm}_p(E/K_\infty) \right) = \mathrm{char}_\Lambda \left( \mathrm{Sel}^{\pm}(K_\infty, E[p^\infty])^\vee \right)
\end{equation}
We recall B.D.Kim's result on the non-existence of proper $\Lambda$-submodules of finite index.
See \cite{shii-non-trivial-Lambda-modules} for the $p$-inert case.
\begin{thm} \label{thm:no-finite}
If  $\mathrm{Sel}^{\pm}(K_\infty, E[p^\infty])$ is $\Lambda$-cotorsion, then
$\mathrm{Sel}^{\pm}(K_\infty, E[p^\infty])$ has no proper $\Lambda$-submodule of finite index, respectively.
\end{thm}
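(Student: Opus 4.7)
The strategy is the standard Greenberg-style argument for dual Selmer groups in Iwasawa theory, adapted to the split supersingular setting by B.D. Kim. By Pontryagin duality, the statement is equivalent to showing that the compact $\Lambda$-module $\mathrm{Sel}^{\pm}(K_\infty, E[p^\infty])^\vee$ has no non-trivial finite $\Lambda$-submodule.

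First, I would invoke global Poitou--Tate duality over $K_\infty$ to fit $\mathrm{Sel}^{\pm}(K_\infty, E[p^\infty])^\vee$ into an exact sequence whose remaining terms are the Pontryagin dual of a $\Sigma$-restricted global cohomology $\mathrm{H}^1_\Sigma(K_\infty, E[p^\infty])^\vee$ and the Pontryagin duals of the local quotients
\[
\mathrm{H}^1(K_{\infty, v}, E[p^\infty]) / L_v^{\pm},
\]
where $L_v^{\pm}$ denotes $E^{\pm}(K_{\infty, v}) \otimes \mathbb{Q}_p/\mathbb{Z}_p$ at $v \in \{\mathfrak{p}, \overline{\mathfrak{p}}\}$ and an unramified-type condition at the remaining primes in a finite set $\Sigma$ containing $p$, the bad primes, and the archimedean places. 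Under (Im), $E[p^\infty]^{G_{K_\infty}} = 0$, from which the standard $\Lambda$-cohomological-dimension formalism gives that $\mathrm{H}^1_\Sigma(K_\infty, E[p^\infty])^\vee$ has no non-trivial finite $\Lambda$-submodule, and a similar argument handles the local terms at primes $v \nmid p$.

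The technical heart of the argument is the local analysis at the primes $v \mid p$. Combining the Kobayashi--Pollack description of $E^{\pm}(K_{n, v}) \otimes \mathbb{Q}_p/\mathbb{Z}_p$ in terms of $\Lambda_n/\omega_n^{\pm}$-type modules with the total ramification assumption (ram), one checks that the Pontryagin dual of $\mathrm{H}^1(K_{\infty, v}, E[p^\infty]) / L_v^{\pm}$ is $\Lambda$-free of rank one, hence contains no non-trivial finite $\Lambda$-submodule. Putting these local and global inputs together, the cotorsion hypothesis forces the global-to-local map in the Poitou--Tate sequence to have the expected corank, and one deduces that the maximal finite $\Lambda$-submodule of $\mathrm{Sel}^{\pm}(K_\infty, E[p^\infty])^\vee$ vanishes. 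The main obstacle is precisely this local computation: because the $\pm$-condition interlaces trace-compatibility over alternate layers, extracting the $\Lambda$-module structure of the local quotient in the totally ramified supersingular regime is delicate, and it is exactly the content of B.D. Kim's analysis in \cite{bdkim-supersingular-selmer}.
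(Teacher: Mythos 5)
The paper's own ``proof'' here is a one-line citation to \cite[Thm.~1.1]{bdkim-supersingular-selmer}, with no argument given. Your sketch accurately reconstructs the Greenberg-style Poitou--Tate strategy that underlies B.~D.~Kim's proof, and you correctly identify the crux as the local computation at $v \mid p$, namely that the Pontryagin dual of $\mathrm{H}^1(K_{\infty,v}, E[p^\infty])/L_v^{\pm}$ is $\Lambda$-free of rank one; that is precisely what makes the supersingular (ram) case delicate and what \cite{bdkim-supersingular-selmer} establishes. One small attribution slip: the vanishing $E[p^\infty]^{G_{K_\infty}} = 0$ in this split supersingular setting comes most cleanly from $a_p(E)=0$ (the formal group at $v \mid p$ has height $2$, so $E(\mathbb{Q}_p)[p]=0$, and $K_\infty/K$ is pro-$p$), rather than from (Im) as you state; the paper itself uses exactly this observation in the proof of its signed control theorem. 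This does not affect the correctness of your outline.
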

\begin{proof}
See \cite[Thm. 1.1]{bdkim-supersingular-selmer}.
\end{proof}
\begin{cor} \label{cor:no-finite}
Under (odd), (Im), (ram), and (CR),
$\mathrm{Sel}^{\pm}(K_\infty, E[p^\infty])$ has no proper $\Lambda$-submodule of finite index; thus, 
$$\mathrm{char}_{\Lambda} \mathrm{Sel}^{\pm}(K_\infty, E[p^\infty]) = \mathrm{Fitt}_{\Lambda} \mathrm{Sel}^{\pm}(K_\infty, E[p^\infty]) .$$
\end{cor}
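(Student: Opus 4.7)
The proof is essentially a concatenation of the two results just cited with a standard commutative algebra fact about the two-dimensional regular local ring $\Lambda \simeq \mathbb{Z}_p\llbracket X \rrbracket$, so I would organize it as follows.

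First, I would invoke Theorem \ref{thm:pm-main-conj}(1), which under (odd), (Im), (ram), and (CR) guarantees that $\mathrm{Sel}^\pm(K_\infty, E[p^\infty])$ is $\Lambda$-cotorsion. This verifies the hypothesis of Theorem \ref{thm:no-finite}, and B.D.Kim's result then yields immediately that $\mathrm{Sel}^{\pm}(K_\infty, E[p^\infty])$ has no proper $\Lambda$-submodule of finite index. Dualizing, this says that the finitely generated torsion $\Lambda$-module $X^\pm := \mathrm{Sel}^\pm(K_\infty, E[p^\infty])^\vee$ contains no nonzero finite $\Lambda$-submodule; since $\Lambda$ is a two-dimensional regular local ring, the finite submodules of a finitely generated $\Lambda$-module coincide with its pseudo-null submodules.

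Second, I would apply the following well-known structural fact: a finitely generated torsion $\Lambda$-module $X$ with no nonzero pseudo-null submodule has projective dimension at most one over $\Lambda$, and therefore admits a square free resolution
$$0 \to \Lambda^r \xrightarrow{A} \Lambda^r \to X \to 0.$$
For such a presentation one has $\mathrm{Fitt}_\Lambda(X) = (\det A)$, and by the structure theorem $\det A$ is (up to a unit) a generator of $\mathrm{char}_\Lambda(X)$. Applied to $X = X^\pm$, this gives the claimed equality
$$\mathrm{char}_\Lambda \mathrm{Sel}^{\pm}(K_\infty, E[p^\infty]) = \mathrm{Fitt}_\Lambda \mathrm{Sel}^{\pm}(K_\infty, E[p^\infty]),$$
where the characteristic/Fitting notation on the discrete cofinitely generated module is shorthand for the corresponding invariants of its Pontryagin dual.

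There is no real obstacle here: Theorems \ref{thm:pm-main-conj}(1) and \ref{thm:no-finite} do all the heavy lifting, and the passage from ``no finite submodule'' to ``Fitting equals characteristic'' is entirely formal. If anything requires care, it is only the standard bookkeeping that identifies the invariants of a cofinitely generated discrete $\Lambda$-module with those of its Pontryagin dual, and the observation that over the two-dimensional regular local ring $\Lambda$ the absence of finite submodules is equivalent to the absence of pseudo-null submodules, which is what allows the projective-dimension-one argument to run.
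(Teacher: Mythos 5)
Your argument is correct and follows exactly the route the paper takes: Theorem \ref{thm:pm-main-conj}(1) supplies the cotorsion hypothesis, Theorem \ref{thm:no-finite} then gives the absence of finite submodules, and the equality $\mathrm{char}_\Lambda = \mathrm{Fitt}_\Lambda$ is the standard consequence (the paper's proof is simply the one-line citation of those two theorems, leaving this commutative algebra implicit). Your spelled-out justification — depth $\geq 1$ via no finite (equivalently pseudo-null) submodule, Auslander--Buchsbaum giving projective dimension $\leq 1$, a square free presentation since the module is torsion, and $\mathrm{Fitt} = (\det A) = \mathrm{char}$ — is exactly the right bookkeeping.
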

\begin{proof}
It follows from Theorems \ref{thm:pm-main-conj} and \ref{thm:no-finite}.
\end{proof}
We recall the signed version of the control theorem.
\begin{thm} \label{thm:pm-control}
The restriction map
$$\mathrm{Sel}^{\pm}(K_n, E[p^\infty])[\omega^{\pm}_n] \to \mathrm{Sel}^{\pm}(K_\infty, E[p^\infty])[\omega^{\pm}_n] $$
is injective with the finite cokernel whose size is bounded independently of $n$. If we further assume that $p \nmid \mathrm{Tam}(E)$, then the restriction map
is an isomorphism.
\end{thm}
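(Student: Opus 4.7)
The plan is to adapt Mazur's original control theorem argument to the signed setting by a snake-lemma comparison of the defining sequences of $\mathrm{Sel}^{\pm}$ at the finite layer $K_n$ and at $K_\infty$. Let $S$ be a finite set of places of $K$ containing $p$, $\infty$, and the primes dividing $N$, and write $\mathcal{H}_v^{\pm}(F) = \mathrm{H}^1(F_v, E[p^\infty])/L_v^{\pm}(F)$ for the local quotient cutting out the $\pm$-Selmer condition. I would consider the commutative diagram
\begin{equation*}
\begin{tikzcd}
0 \ar[r] & \mathrm{Sel}^{\pm}(K_n, E[p^\infty]) \ar[r] \ar[d, "s_n"] & \mathrm{H}^1(G_S(K_n), E[p^\infty]) \ar[r] \ar[d, "h_n"] & \bigoplus_{v \in S} \mathcal{H}_v^{\pm}(K_n) \ar[d, "\ell_n"] \\
0 \ar[r] & \mathrm{Sel}^{\pm}(K_\infty, E[p^\infty])[\omega_n^{\pm}] \ar[r] & \mathrm{H}^1(G_S(K_\infty), E[p^\infty])[\omega_n^{\pm}] \ar[r] & \bigoplus_{v \in S} \mathcal{H}_v^{\pm}(K_\infty)[\omega_n^{\pm}]
\end{tikzcd}
\end{equation*}
obtained by restricting the $K_\infty$-sequence to its $[\omega_n^{\pm}]$-kernels. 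The snake lemma reduces the task to controlling $\ker(h_n)$, $\mathrm{cok}(h_n)$, and $\ker(\ell_n) = \bigoplus_{v \in S} \ker(\ell_{n,v})$.

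For the global term I would apply inflation--restriction. Under (Im), the irreducibility of $\overline{\rho}$ (together with its surjectivity when $p=5$), combined with $K_\infty/K$ being pro-$p$, forces $E(K_\infty)[p^\infty] = 0$. Hence $\mathrm{H}^i(\mathrm{Gal}(K_\infty/K_n), E(K_\infty)[p^\infty]) = 0$ for all $i \geq 0$, so $\ker(h_n) = \mathrm{cok}(h_n) = 0$. Injectivity of $s_n$ follows at once, and $\mathrm{cok}(s_n)$ is controlled entirely by the local kernels.

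For the local terms at primes $v \nmid p$, a standard unramified-versus-ramified analysis together with local Tate duality bounds $\ker(\ell_{n,v})$ by the $p$-part $c_v^{(p)}(E)$ of the Tamagawa factor, giving a finite contribution independent of $n$ that vanishes when $p \nmid \mathrm{Tam}(E)$. At primes $v \mid p$, the decomposition (\ref{eqn:local-decomposition}) reduces us to the cyclotomic $\mathbb{Z}_p$-extension of $\mathbb{Q}_p$. Here the essential input is Kobayashi's structural description of $\widehat{E}^{\pm}(\mathfrak{m}_n)$ together with its $p$-split reformulation used in \cite{bdkim-supersingular-selmer}: the Pontryagin dual of $\widehat{E}^{\pm}(\mathfrak{m}_\infty) \otimes \mathbb{Q}_p/\mathbb{Z}_p$ is cofree of the expected rank over $\Lambda/\omega^{\pm}$, and the natural map $\widehat{E}^{\pm}(\mathfrak{m}_n) \otimes \mathbb{Q}_p/\mathbb{Z}_p \to (\widehat{E}^{\pm}(\mathfrak{m}_\infty) \otimes \mathbb{Q}_p/\mathbb{Z}_p)[\omega_n^{\pm}]$ is an isomorphism. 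Dualizing via local Tate duality translates this into the injectivity of $\ell_{n,v}$ for $v \mid p$. Assembling contributions yields a finite cokernel of size bounded by $\prod_{v \nmid p} c_v^{(p)}(E)$, independent of $n$, and trivial when $p \nmid \mathrm{Tam}(E)$.

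The main obstacle is the local verification at primes above $p$: one must exhibit the precise trace-compatibility of $\widehat{E}^{\pm}(\mathfrak{m}_n)$ along the tower, confirm that its finite-layer piece exactly recovers the $[\omega_n^{\pm}]$-submodule of the limit, and transfer this through local Tate duality to the quotient $\mathcal{H}_v^{\pm}$. Granting this Kobayashi-type input, the remainder of the argument is a routine diagram chase.
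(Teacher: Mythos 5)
Your proposal follows essentially the same route as the paper, which in turn is a citation to Iovita--Pollack~\cite[Thm.~6.8]{iovita-pollack} and Kobayashi~\cite[Thm.~9.3]{kobayashi-thesis}: a snake-lemma comparison of defining sequences, inflation--restriction for the global term, Kobayashi's $p$-local structure result to kill the $p$-local kernel, and Greenberg's argument at places away from $p$ to bound the cokernel by Tamagawa factors. Two small remarks. First, the paper extracts $E(K_\infty)[p^\infty]=0$ from $a_p(E)=0$: since $E$ is supersingular at $p$ and $p$ splits in $K$, $E(K_\mathfrak{p})[p]=E(\mathbb{Q}_p)[p]=0$, and the pro-$p$ extension then gives $E(K_\infty)[p]=0$; this is cleaner than your route through (Im), which requires a Weil-pairing argument to rule out $\overline{\rho}|_{G_K}$ acquiring a trivial subcharacter, and it shows (Im) is not actually what makes the control theorem go through. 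Second, in your diagram the bottom middle and right entries should carry $[\omega_n]$ rather than $[\omega_n^{\pm}]$ (the image of the restriction $\mathrm{H}^1(G_S(K_n),-)\to \mathrm{H}^1(G_S(K_\infty),-)$ lands only in the $\omega_n$-torsion); one obtains the stated comparison of $\omega_n^{\pm}$-torsion modules afterward by restricting along the already-established injection $\mathrm{Sel}^{\pm}(K_n)\hookrightarrow \mathrm{Sel}^{\pm}(K_\infty)[\omega_n]$. Modulo these cosmetic points, your argument is correct and coincides in substance with the one the paper cites.
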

\begin{proof}
This is \cite[Thm. 6.8]{iovita-pollack}.
The $a_p(E)=0$ condition ensures that $E(K)[p]$ is trivial, so the injectivity of the restriction map follows as explained in \cite[Lem. 9.1]{kobayashi-thesis}.
Since the size of the cokernel depends only on prime-to-$p$ local conditions, the situation coincides with the ordinary case. By \cite[Lem. 3.3]{greenberg-lnm}, the cokernel is finite and is bounded independently of $n$.
When $p \nmid \mathrm{Tam}(E)$, the cokernel vanishes, so the restriction map becomes an isomorphism.
See also \cite[Thm. 9.3]{kobayashi-thesis} and \cite[Prop. 3.8]{greenberg-lnm} for further details.
\end{proof}

\subsection{The consequence}
\begin{cor} \label{cor:the-key-lemma}
Under  (odd), (Im), (ram), and (CR), we have
$$ \left(  \widetilde{\omega}^{\mp}_n  \cdot L^{\pm}_p(E/K_\infty) \Mod{\omega_n} \right)
\subseteq
\left( \widetilde{\omega}^{\mp}_n \right) \cdot \mathrm{Fitt}_{\Lambda_n} \left(  \mathrm{Sel}^{\pm}(K_n, E[p^\infty])^\vee \right) $$
 in $\Lambda_n$, respectively.
\end{cor}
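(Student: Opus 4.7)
The plan is to propagate the one-sided divisibility of the signed main conjecture from the tower down to the finite layer $\Lambda_n$, using Pontryagin duality, base change of Fitting ideals, and the injectivity part of the $\pm$-control theorem. Combining Theorem~\ref{thm:pm-main-conj}(2) with Corollary~\ref{cor:no-finite} upgrades the characteristic-ideal divisibility to the Fitting-ideal divisibility
\[
\left( L^{\pm}_p(E/K_\infty) \right) \ \subseteq\ \mathrm{Fitt}_\Lambda \left( \mathrm{Sel}^{\pm}(K_\infty, E[p^\infty])^\vee \right)
\]
over $\Lambda$, which is the starting point. Projecting modulo $\omega^{\pm}_n$, Pontryagin duality identifies $\mathrm{Sel}^{\pm}(K_\infty, E[p^\infty])^\vee/\omega^{\pm}_n$ with $\left(\mathrm{Sel}^{\pm}(K_\infty, E[p^\infty])[\omega^{\pm}_n]\right)^\vee$, and the base-change compatibility of Fitting ideals yields
\[
\left( L^{\pm}_p(E/K_\infty) \Mod{\omega^{\pm}_n} \right) \ \subseteq\ \mathrm{Fitt}_{\Lambda/\omega^{\pm}_n\Lambda} \left( \mathrm{Sel}^{\pm}(K_\infty, E[p^\infty])[\omega^{\pm}_n]^\vee \right).
\]

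Next, I would trade the infinite-level Selmer group for its finite-layer counterpart. By Theorem~\ref{thm:pm-control}, the restriction map $\mathrm{Sel}^{\pm}(K_n, E[p^\infty])[\omega^{\pm}_n] \hookrightarrow \mathrm{Sel}^{\pm}(K_\infty, E[p^\infty])[\omega^{\pm}_n]$ is injective; only this injectivity is needed, so the finite cokernel plays no role in the direction we want. Dualizing yields a surjection of the Pontryagin duals, and since a surjection enlarges the Fitting ideal, one obtains
\[
\mathrm{Fitt}_{\Lambda/\omega^{\pm}_n\Lambda}\left(\mathrm{Sel}^{\pm}(K_\infty, E[p^\infty])[\omega^{\pm}_n]^\vee\right)\ \subseteq\ \mathrm{Fitt}_{\Lambda/\omega^{\pm}_n\Lambda}\left(\mathrm{Sel}^{\pm}(K_n, E[p^\infty])[\omega^{\pm}_n]^\vee\right).
\]
A second application of Pontryagin duality together with the base-change property identifies the right-hand side with the image of $\mathrm{Fitt}_{\Lambda_n}\left(\mathrm{Sel}^{\pm}(K_n, E[p^\infty])^\vee\right)$ under the natural surjection $\Lambda_n \twoheadrightarrow \Lambda/\omega^{\pm}_n\Lambda$.

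Finally, I would lift the resulting inclusion from $\Lambda/\omega^{\pm}_n\Lambda$ back to $\Lambda_n = \Lambda/\omega_n\Lambda$. Because $\omega_n = \widetilde{\omega}^{\mp}_n \cdot \omega^{\pm}_n$, multiplication by $\widetilde{\omega}^{\mp}_n$ defines a $\Lambda$-linear injection $\Lambda/\omega^{\pm}_n\Lambda \hookrightarrow \Lambda_n$ whose image is the principal ideal $\left(\widetilde{\omega}^{\mp}_n\right) \subseteq \Lambda_n$. Multiplying the inclusion through by $\widetilde{\omega}^{\mp}_n$ produces exactly
\[
\left( \widetilde{\omega}^{\mp}_n \cdot L^{\pm}_p(E/K_\infty) \Mod{\omega_n} \right) \ \subseteq\ \left( \widetilde{\omega}^{\mp}_n \right) \cdot \mathrm{Fitt}_{\Lambda_n}\left( \mathrm{Sel}^{\pm}(K_n, E[p^\infty])^\vee \right)
\]
in $\Lambda_n$. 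No serious obstacle is anticipated: the whole argument is bookkeeping across three changes of base ring. The one subtle point is tracking which ring each Fitting ideal lives in, and noticing that only the injectivity part of Theorem~\ref{thm:pm-control}---not any vanishing of its cokernel---is invoked, so no hypothesis on Tamagawa factors enters the argument.
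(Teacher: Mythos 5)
Your argument is correct and follows the same route as the paper's own proof: upgrade the characteristic-ideal divisibility to a Fitting-ideal divisibility over $\Lambda$ via Corollary~\ref{cor:no-finite}, reduce modulo $\omega^{\pm}_n$ using base change of Fitting ideals and Pontryagin duality, invoke the $\pm$-control theorem (Theorem~\ref{thm:pm-control}), apply base change again to descend to $\Lambda_n/\omega^{\pm}_n\Lambda_n$, and multiply through by $\widetilde{\omega}^{\mp}_n$ to land in $\Lambda_n$. Your explicit remark that only the injectivity portion of the control theorem is needed (so that the dual map is a surjection and the Fitting ideal only grows, making the bounded cokernel and any Tamagawa-number hypothesis irrelevant) is an accurate unpacking of what the paper does implicitly at that step.
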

\begin{proof}
By applying Theorem \ref{thm:pm-main-conj} and Corollary \ref{cor:no-finite}, we have
$$ \left( L^{\pm}_p(E/K_\infty) \right)  \subseteq \mathrm{Fitt}_\Lambda \left( \mathrm{Sel}^{\pm}(K_\infty, E[p^\infty])^\vee \right) .$$
Taking the quotient by $\omega^{\pm}_n$, we obtain
$$
\left( L^{\pm}_p(E/K_\infty) \Mod{\omega^{\pm}_n} \right) \subseteq \mathrm{Fitt}_{\Lambda_n / \omega^{\pm}_n} \left( \left( \mathrm{Sel}^{\pm}(K_\infty, E[p^\infty])[\omega^{\pm}_n] \right)^\vee \right)
$$
in $\Lambda_n/\omega^{\pm}_n$, respectively.
By the signed control theorem (Theorem \ref{thm:pm-control}), we obtain
$$\left( L^{\pm}_p(E/K_\infty) \Mod{\omega^{\pm}_n} \right) \subseteq \mathrm{Fitt}_{\Lambda_n / \omega^{\pm}_n} \left( \left( \mathrm{Sel}^{\pm}(K_n, E[p^\infty])[\omega^{\pm}_n] \right)^\vee \right)$$
in $\Lambda_n/\omega^{\pm}_n$, respectively.
Since Fitting ideals are compatible with base change, we have the equality
\begin{align*}
\mathrm{Fitt}_{\Lambda_n / \omega^{\pm}_n} \left( \left( \mathrm{Sel}^{\pm}(K_n, E[p^\infty])[\omega^{\pm}_n] \right)^\vee \right) & = \dfrac{ \mathrm{Fitt}_{\Lambda_n} \left(  \mathrm{Sel}^{\pm}(K_n, E[p^\infty])^\vee \right) + (\omega^{\pm}_n) }{ (\omega^{\pm}_n) }
\end{align*}
in $\Lambda_n / \omega^{\pm}_n$.
Thus, we have inclusions
$$\left( L^{\pm}_p(E/K_\infty) \Mod{\omega_n} \right)  + (\omega^{\pm}_n) \subseteq \mathrm{Fitt}_{\Lambda_n} \left(  \mathrm{Sel}^{\pm}(K_n, E[p^\infty])^\vee \right) + (\omega^{\pm}_n)$$
in $\Lambda_n$, respectively.
Multiplying $\widetilde{\omega}^{\mp}_n$, the conclusion immediately follows.
\end{proof}
\begin{rem}
If we further assume $p \nmid \mathrm{Tam}(E)$ and the $\pm$-main conjectures (\ref{eqn:main-conj}), then the inclusion in Corollary \ref{cor:the-key-lemma} becomes an equality.
\end{rem}

\section{The proof of Theorem \ref{thm:main}} \label{sec:putting}
By using the global Poitou--Tate duality, consider the exact sequence of $\Lambda_n$-modules (cf. \cite[(4.2)]{kitajima-otsuki})
\[
\xymatrix{
\left( \dfrac{E(K_{n,p}) \otimes \mathbb{Q}_p/\mathbb{Z}_p}{E^{\pm}(K_{n,p})  \otimes \mathbb{Q}_p/\mathbb{Z}_p} \right)^\vee \ar[r]^-{\iota^{\pm}} & \mathrm{Sel}(K_n, E[p^\infty])^\vee  \ar[r] & \mathrm{Sel}^\pm(K_n, E[p^\infty])^\vee  \ar[r] & 0 .
}
\]
Then we have
$$\mathrm{Fitt}_{\Lambda_n} \left( \left( \dfrac{E(K_{n,p}) \otimes \mathbb{Q}_p/\mathbb{Z}_p}{E^{\pm}(K_{n,p})  \otimes \mathbb{Q}_p/\mathbb{Z}_p} \right)^\vee / \mathrm{ker} (\iota^{\pm}) \right) \cdot \mathrm{Fitt}_{\Lambda_n} \left( \mathrm{Sel}^\pm(K_n, E[p^\infty])^\vee \right) 
 \subseteq
\mathrm{Fitt}_{\Lambda_n} \left( \mathrm{Sel}(K_n, E[p^\infty])^\vee \right) $$
thanks to the behavior of Fitting ideals in the short exact sequence.
Due to the behavior of Fitting ideals under the quotient map,  we also have
$$
\mathrm{Fitt}_{\Lambda_n} \left( \left( \dfrac{E(K_{n,p}) \otimes \mathbb{Q}_p/\mathbb{Z}_p}{E^{\pm}(K_{n,p})  \otimes \mathbb{Q}_p/\mathbb{Z}_p} \right)^\vee \right) \subseteq
\mathrm{Fitt}_{\Lambda_n} \left( \left( \dfrac{E(K_{n,p}) \otimes \mathbb{Q}_p/\mathbb{Z}_p}{E^{\pm}(K_{n,p})  \otimes \mathbb{Q}_p/\mathbb{Z}_p} \right)^\vee / \mathrm{ker} (\iota^{\pm}) \right) .
$$
Applying \cite[Lem. 3.14]{kitajima-otsuki} with (\ref{eqn:local-decomposition}), we observe that
\begin{align*}
\left( \dfrac{ E(K_{n,p}) \otimes \mathbb{Q}_p/\mathbb{Z}_p }{E^{\pm}(K_{n,p}) \otimes \mathbb{Q}_p/\mathbb{Z}_p} \right)^\vee & \simeq  \left( \dfrac{ \widehat{E}(\mathfrak{m}_{n}) \otimes \mathbb{Q}_p/\mathbb{Z}_p }{\widehat{E}^{\pm}(\mathfrak{m}_{n}) \otimes \mathbb{Q}_p/\mathbb{Z}_p} \right)^\vee \oplus \left( \dfrac{ \widehat{E}(\mathfrak{m}_{n}) \otimes \mathbb{Q}_p/\mathbb{Z}_p }{\widehat{E}^{\pm}(\mathfrak{m}_{n}) \otimes \mathbb{Q}_p/\mathbb{Z}_p} \right)^\vee \\
& \simeq \left( \dfrac{ \widehat{E}(\mathfrak{m}_{n})  }{\widehat{E}^{\pm}(\mathfrak{m}_{n}) } \otimes \mathbb{Q}_p/\mathbb{Z}_p \right)^\vee \oplus \left( \dfrac{ \widehat{E}(\mathfrak{m}_{n})  }{\widehat{E}^{\pm}(\mathfrak{m}_{n}) } \otimes \mathbb{Q}_p/\mathbb{Z}_p \right)^\vee .
\end{align*}
Due to \cite[Prop. 4.11]{iovita-pollack}, we have the following exact sequence
\[
\xymatrix{
0 \ar[r] & \widehat{E}(p\mathbb{Z}_p) \ar[r]^-{f} \ar[d]_-{\simeq} & \widehat{E}^+(\mathfrak{m}_n) \oplus \widehat{E}^-(\mathfrak{m}_n) \ar[r]^-{g} \ar[d]_-{\simeq} & \widehat{E}(\mathfrak{m}_n) \ar[r] \ar[d]_-{\simeq}^-{\textrm{\cite[Prop. 5.8]{iovita-pollack}}} & 0  \\
0 \ar[r] &  \widetilde{\omega}^{+}_n \widetilde{\omega}^{-}_n  \Lambda_n \ar[r] & \widetilde{\omega}^-_n\Lambda_n \oplus \widetilde{\omega}^+_n\Lambda_n \ar[r] & \left( \widetilde{\omega}^{+}_n, \widetilde{\omega}^{-}_n \right) \Lambda_n \ar[r] & 0
}
\]
where $f$ is the diagonal embedding and $g: (a,b) \mapsto a-b$.
Note that $\widetilde{\omega}^{+}_n \widetilde{\omega}^{-}_n  \Lambda_n \simeq \Lambda_n / X\Lambda_n  \simeq \mathbb{Z}_p  $.
Since
$\widehat{E}(\mathfrak{m}_n) / \widehat{E}^{\pm}(\mathfrak{m}_n) \simeq  \left( \widetilde{\omega}^{+}_n, \widetilde{\omega}^{-}_n \right) \Lambda_n / \widetilde{\omega}^{\mp}_n\Lambda_n$,
we have
$$\mathrm{Fitt}_{\Lambda_n} \left( \left( \frac{ \left( \widetilde{\omega}^{+}_n, \widetilde{\omega}^{-}_n \right) \Lambda_n }{ \widetilde{\omega}^{\mp}_n\Lambda_n } \otimes \mathbb{Q}_p/\mathbb{Z}_p \right)^\vee  \right)
=
\mathrm{Fitt}_{\Lambda_n} \left( \left(  \dfrac{ \widehat{E}(\mathfrak{m}_n) }{ \widehat{E}^{\pm}(\mathfrak{m}_n) }    \otimes \mathbb{Q}_p/\mathbb{Z}_p \right)^\vee  \right)  .$$
\begin{prop}
$$\mathrm{Fitt}_{\Lambda_n} \left( \left( \frac{ \left( \widetilde{\omega}^{+}_n, \widetilde{\omega}^{-}_n \right) \Lambda_n }{ \widetilde{\omega}^{\mp}_n\Lambda_n } \otimes \mathbb{Q}_p/\mathbb{Z}_p \right)^\vee  \right)= \widetilde{\omega}^{\mp}_n\Lambda_n,$$ respectively.
\end{prop}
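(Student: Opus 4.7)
The plan is to first identify $M := (\widetilde{\omega}^{+}_n, \widetilde{\omega}^{-}_n)\Lambda_n / \widetilde{\omega}^{\mp}_n \Lambda_n$ as a cyclic $\Lambda_n$-module with a specific annihilator, and then use $\mathbb{Z}_p$-freeness together with a self-duality for the Gorenstein $\mathbb{Z}_p$-order $R := \Lambda_n/\widetilde{\omega}^{\mp}_n\Lambda_n$ to reduce the Fitting ideal of $(M \otimes \mathbb{Q}_p/\mathbb{Z}_p)^{\vee}$ to that of $M$ itself.

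First I would show $M \simeq \Lambda_n / \widetilde{\omega}^{\mp}_n\Lambda_n$. The module is generated by the class of $\widetilde{\omega}^{\pm}_n$. To compute its annihilator, lift to $\mathbb{Z}_p[X]$: using $\omega_n = X\widetilde{\omega}^{+}_n\widetilde{\omega}^{-}_n \in \widetilde{\omega}^{\mp}_n\mathbb{Z}_p[X]$, the condition $f \cdot \widetilde{\omega}^{\pm}_n \in \widetilde{\omega}^{\mp}_n\Lambda_n$ translates into $\widetilde{\omega}^{\mp}_n \mid \tilde f \cdot \widetilde{\omega}^{\pm}_n$ in $\mathbb{Z}_p[X]$. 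Since $\widetilde{\omega}^{+}_n$ and $\widetilde{\omega}^{-}_n$ are coprime in the UFD $\mathbb{Z}_p[X]$ (being products of the Eisenstein irreducibles $\Phi_m(1+X)$ for disjoint parities of $m$), this forces $\widetilde{\omega}^{\mp}_n \mid \tilde f$. Hence $M \simeq \Lambda_n/\widetilde{\omega}^{\mp}_n\Lambda_n$; since $\widetilde{\omega}^{\mp}_n$ is a distinguished polynomial, $M$ is $\mathbb{Z}_p$-free of rank $d := \deg \widetilde{\omega}^{\mp}_n$.

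For the dual, $\mathbb{Z}_p$-freeness of $M$ gives the standard adjunction $(M \otimes \mathbb{Q}_p/\mathbb{Z}_p)^{\vee} \simeq \mathrm{Hom}_{\mathbb{Z}_p}(M, \mathbb{Z}_p)$ as $\Lambda_n$-modules. To identify this $\mathbb{Z}_p$-linear dual with $M$ itself as a $\Lambda_n$-module, I would construct a $\mathbb{Z}_p$-bilinear form $\langle \cdot, \cdot \rangle : R \times R \to \mathbb{Z}_p$ satisfying $\langle ra, b \rangle = \langle a, rb \rangle$ by setting $\langle a, b\rangle$ equal to the coefficient of $X^{d-1}$ in $ab \bmod \widetilde{\omega}^{\mp}_n$. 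In the basis $\{1, X, \ldots, X^{d-1}\}$ the Gram matrix is anti-triangular with $1$'s on the anti-diagonal and zeros strictly below it, so its determinant is a unit in $\mathbb{Z}_p$ and the pairing is perfect. This produces an $R$-linear isomorphism $R \simeq \mathrm{Hom}_{\mathbb{Z}_p}(R, \mathbb{Z}_p)$, hence $(M \otimes \mathbb{Q}_p/\mathbb{Z}_p)^{\vee} \simeq \Lambda_n/\widetilde{\omega}^{\mp}_n\Lambda_n$ as $\Lambda_n$-modules, whose Fitting ideal equals $\widetilde{\omega}^{\mp}_n\Lambda_n$.

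The main technical point is the integrality of the self-pairing — that the Gram matrix has unit determinant in $\mathbb{Z}_p$ — which is equivalent to saying that $R$ is Gorenstein as a $\mathbb{Z}_p$-order. This is automatic because $R = \mathbb{Z}_p[X]/(\widetilde{\omega}^{\mp}_n)$ is a hypersurface quotient, i.e., a complete intersection, with $\widetilde{\omega}^{\mp}_n$ a non-zero-divisor in the Cohen--Macaulay ring $\mathbb{Z}_p[X]$. If preferred, one can bypass the Gorenstein language entirely and just verify by hand that the $(i,j)$-entry of the Gram matrix vanishes for $i+j < d-1$, equals $1$ for $i+j = d-1$, and is determined by the distinguished reduction of $X^{i+j}$ for $i+j \geq d$.
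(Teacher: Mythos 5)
Your proof is correct. The paper itself gives no in-text argument for this proposition — it simply cites \cite[Prop. 4.1]{kim-kurihara} — so there is nothing to compare against line by line; your reconstruction (identify $M$ as the cyclic module $\Lambda_n/\widetilde{\omega}^{\mp}_n\Lambda_n$ via coprimality of $\widetilde{\omega}^{+}_n$ and $\widetilde{\omega}^{-}_n$ in $\mathbb{Z}_p[X]$, convert the Pontryagin dual of $M\otimes\mathbb{Q}_p/\mathbb{Z}_p$ to $\mathrm{Hom}_{\mathbb{Z}_p}(M,\mathbb{Z}_p)$ using $\mathbb{Z}_p$-freeness, and invoke the perfect trace-type pairing on the complete intersection $\mathbb{Z}_p[X]/(\widetilde{\omega}^{\mp}_n)$ to get $R$-linear self-duality) is a complete and standard route to the result. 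The one point worth making explicit, which you rightly flag as the crux, is that the anti-triangular Gram matrix has unit determinant precisely because $\widetilde{\omega}^{\mp}_n$ is monic (distinguished), so the coefficient of $X^{d-1}$ in $X^iX^j \bmod \widetilde{\omega}^{\mp}_n$ is $1$ when $i+j=d-1$ and $0$ when $i+j<d-1$; this is exactly the Gorenstein property you name, and your hands-on verification is a clean way to avoid citing it as a black box.
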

\begin{proof}
See \cite[Prop. 4.1]{kim-kurihara}.
\end{proof}
To sum up, we have
\begin{align*}
& \left( \widetilde{\omega}^{\mp}_n \right)^2 \cdot \mathrm{Fitt}_{\Lambda_n} \left( \mathrm{Sel}^\pm(K_n, E[p^\infty])^\vee \right) \\
& =
\mathrm{Fitt}_{\Lambda_n} \left( \left( \frac{ \left( \widetilde{\omega}^{+}_n, \widetilde{\omega}^{-}_n \right) \Lambda_n }{ \widetilde{\omega}^{\mp}_n\Lambda_n } \otimes \mathbb{Q}_p/\mathbb{Z}_p \right)^\vee  \right)^{2} \cdot \mathrm{Fitt}_{\Lambda_n} \left( \mathrm{Sel}^\pm(K_n, E[p^\infty])^\vee \right) \\
& =
\mathrm{Fitt}_{\Lambda_n} \left( \left( \widehat{E}(\mathfrak{m}_n) / \widehat{E}^{\pm}(\mathfrak{m}_n) \otimes \mathbb{Q}_p/\mathbb{Z}_p \right)^\vee  \right)^{2} \cdot \mathrm{Fitt}_{\Lambda_n} \left( \mathrm{Sel}^\pm(K_n, E[p^\infty])^\vee \right) \\
& \subseteq
\mathrm{Fitt}_{\Lambda_n} \left( \left( \dfrac{E(K_{n,p}) \otimes \mathbb{Q}_p/\mathbb{Z}_p}{E^{\pm}(K_{n,p})  \otimes \mathbb{Q}_p/\mathbb{Z}_p} \right)^\vee / \mathrm{ker} (\iota^{\pm}) \right) \cdot \mathrm{Fitt}_{\Lambda_n} \left( \mathrm{Sel}^\pm(K_n, E[p^\infty])^\vee \right) \\
& \subseteq
\mathrm{Fitt}_{\Lambda_n} \left( \mathrm{Sel}(K_n, E[p^\infty])^\vee \right) .
\end{align*}
By Corollary \ref{cor:the-key-lemma} with the multiplication by $\widetilde{\omega}^{\mp}_n$, we have
$$\left( \left( \widetilde{\omega}^{\mp}_n \right)^2 \cdot L^{\pm}_p(E/K_\infty) \Mod{\omega_n} \right) \subseteq \mathrm{Fitt}_{\Lambda_n} \left( \mathrm{Sel}(K_n, E[p^\infty])^\vee \right) .$$
By using  (\ref{eqn:betolini-darmon-theta-signed}), (\ref{eqn:signed-p-adic-L-functions}), and the functional equation for Bertolini--Darmon's theta elements (see \cite[Prop. 2.13]{bertolini-darmon-mumford-tate-1996} and \cite[Lem. 1.5]{bertolini-darmon-imc-2005}), we have
\begin{align*}
\left( \left( \widetilde{\omega}^{\mp}_n \right)^2 \cdot L^{\pm}_p(E/K_\infty)  \Mod{\omega_n} \right) & = \left( \left( \widetilde{\omega}^{\mp}_n \right)^2 \cdot \theta^{\pm}(E/K_n) \cdot \iota(  \theta^{\pm}(E/K_n) )  \right) \\
& = \left( \left( \widetilde{\omega}^{\mp}_n \right)^2 \cdot \theta^{\pm}(E/K_n) \cdot \theta^{\pm}(E/K_n)  \right) \\
& = \left(  \theta(E/K_n) \cdot \theta(E/K_n)   \right) \\
& = \left(  \theta(E/K_n) \cdot \iota( \theta(E/K_n) )  \right) \\
& = \left(  L_p(E/K_n)  \right).
\end{align*}
Thus, Theorem \ref{thm:main} follows.

\bibliographystyle{amsalpha}
\bibliography{library}

\end{document}